\theoremstyle{plain}
\newtheorem{thm}{Theorem}[section]
\theoremstyle{plain}
\newtheorem{lem}[thm]{Lemma}
\newtheorem{prop}[thm]{Proposition}
\theoremstyle{definition}
\newtheorem*{rem}{Remark}
\DeclareMathOperator{\sech}{sech}
\newcommand{\De} {\Delta}
\newcommand{\la} {\lambda}
\newcommand{\bn}{\mathbb{B}^{N}}
\newcommand{\rn}{\mathbb{R}^{N}}
\numberwithin{equation}{section} \allowdisplaybreaks
	\title[]{Nondegeneracy of Positive solutions of a Semilinear elliptic problem in the Hyperbolic space}
\date{}
\author[K Sandeep]{K Sandeep}
\address[K Sandeep]{\newline
  Centre for Applicable Mathematics,
  Tata Instiute of Fundamental Research,
   P.O.\ Box 6503, GKVK Post Office,
   Bangalore 560065, India}
 \email[]{sandeep@math.tifrbng.res.in}
 \author[Debdip Ganguly]{Debdip Ganguly}
 \address[Debdip Ganguly]{\newline
  Centre for Applicable Mathematics,
  Tata Instiute of Fundamental Research,
   P.O.\ Box 6503, GKVK Post Office,
   Bangalore 560065, India}
 \email[]{debdip@math.tifrbng.res.in}
\begin{document}

\begin{abstract}
In this article we will study the nondegeneracy properties of positive finite energy solutions of 
the equation $-\De_{\bn} u - \la u = |u|^{p-1}u $ in the Hyperbolic space. We will show that the degeneracy occurs only in an 
$N$ dimensional subspace. We will prove that the positive solutions are nondegenerate in the case of geodesic balls.
\end{abstract}

\maketitle

\section{Introduction}
 
 In this article we will study the degeneracy properties of positive solutions of the problem
\begin{equation} \label{E:1.1}
 -\De_{\bn} u - \la u = |u|^{p-1}u ,   u \in H^{1}(\bn)
\end{equation}
where $\la < (\frac{N-1}{2})^{2}$ , $1 <p \leq \frac{N+2}{N-2}$ if $N\geq 3$ , $1<p < \infty $ if $N = 2 $ and ${H^{1}}(\bn)$ 
denotes the Sobolev
space on the disc model of the Hyperbolic space $\bn$
 and  $\De_{\bn}$  denotes the Laplace Beltrami operator on $\bn$. Apart from its own mathematical interest, this equation arises in the 
study of Grushin operators(\cite{B}), Hardy-Sobolev-Mazy'a equations(\cite{HS},\cite{MS}) and in some fluid dynamics models (\cite{MT}). 
This equation is also the hyperbolic version
 of the well known Brezis Nirenberg problem (\cite{BN}).\\\\
Existence and uniqueness of positive solutions of \eqref {E:1.1} has been studied in \cite{MS}. In the subcritical case 
(i.e., $p> 1$ if $N =2$ and $1<p < 2^{*} -1 $ if $N \geq 3$) the equation \eqref {E:1.1} has a positive solution iff
 $\la <\left(\frac{N-1}{2}\right)^2$. In the critical case  $N\ge 4, p = 2^{*} -1 $, \eqref {E:1.1} has a positive solution iff $\frac{N(N-2)}{4}<\la <\left(\frac{N-1}{2}\right)^2$ while for $N=3$, positive solutions does not exist for any $\la$. These positive solutions are also shown to be unique up to Hyperbolic isometries except $N=2$ and $\la > \frac{2(p+1)}{(p+3)^{2}}$.
Exitence of infinitely many sign changing radial solutions in higher dimensions has been obtained in \cite{PS} and \cite{DS}.\\\\
Our main aim in this paper is to study the degeneracy properties of positive solutions obtained in \cite{MS}.
It is well known that the degeneracy properties are crucial in studying the stability of the solution and also to obtain various existence 
results, bifurcation results, etc using tools like finite dimensional reduction. In the case of bounded domains in Euclidean space
degeneracy of solution is used to study the uniqueness and other qualitative properties of solutions (see \cite{P},\cite{DGP},\cite{S}).\\\\
For some special values of $\la$ and $p$, the degeneracy of the solution has been studied in \cite{HS}. In this special case the solution 
of \eqref{E:1.1} is explicitly known and this plays a major role in the proof as one can use informations on the solution of a hyper geometric
 ODE arising in the analysis. However in general we do not have explicit expression for the solution of \eqref{E:1.1} and the proof presented
 in the special case in \cite{HS} does not carry over.\\\\
In view of the existence and non-existence results obtained in \cite{MS} we assume: 
\begin{equation}\label{conditions}
\left.
\begin{array}{lll}
\la <(\frac{N-1}{2})^{2} & {\rm when} &1 <p < \frac{N+2}{N-2},N\ge 3 \\
\frac{N(N-2)}{4}<\la <(\frac{N-1}{2})^{2}  & {\rm when} &p = \frac{N+2}{N-2}, N\ge 4\\ 
\la < \frac{2(p+1)}{(p+3)^{2}} & {\rm when} & N = 2 .
\end{array} \right\}
\end{equation}
 Let $U$ be the unique positive radial solution of \eqref{E:1.1} under the assumption \eqref{conditions}.
Consider the linearized operator $L$ given by
\begin{equation}\label{E:2.4}
   L (\Phi) = - \De_{\bn} \Phi - \la \Phi - p U^{p-1} \Phi ,\;\; \Phi \in {H^{1}}(\bn)
\end{equation}
We are interested in knowing whether $0$ is in the spectrum of $L$ and more generally to find the kernel of $L$.\\
Degeneracy properties of similar equations in the Euclidean space $\rn$ are well known. In fact if $u$ is a solution then 
a differentiation of the equation will tell us that $\frac{\partial u}{\partial x_i}$ is a solution of the linearized equation.
We show that in the hyperbolic case a different vector field will do the job $\frac{\partial }{\partial x_i}$ does in $\rn$.  
Observe that the equation is invariant under hyperbolic isometries, i.e, if $u$ solves \eqref{E:1.1} and $\tau$ is an isometry of $\bn$ 
then $u\circ \tau$ also solves 
\eqref{E:1.1}. This fact indicates that the solutions has to be degenerate. However we see that the degeneracy happens only along an $N$ 
dimensional subspace. 
\begin{thm}\label{T:3.1} Let $V_i,\;i=1,...,N$ be the vector fields in $\bn$ given by
\begin{equation}
V_i(x) = (1+|x|^2)\frac{\partial}{\partial{x_i}} - 2x_i\sum\limits_{j=1}^N x_j\frac{\partial}{\partial{x_j}},\;\;i=1,...,N
\end{equation}
and $\Phi_{i}(x) = V_i\left( U \right)$, then $L (\Phi_i)=0$ for all $i=1,...,N.$ Moreover 
$\{\Phi_{i}\;:\;i=1,...,N\}$ is a basis for the kernel of $L.$
\end{thm}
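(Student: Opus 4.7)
The plan is to split the argument into two parts: verifying $\mathrm{span}\{\Phi_1,\dots,\Phi_N\} \subset \ker L$, and then showing this span exhausts the kernel.

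For the first inclusion, the cleanest route is via isometry invariance. In the ball model, a direct check shows that the vector fields $V_i$ are the infinitesimal generators of the one-parameter groups of hyperbolic translations, hence Killing fields of the hyperbolic metric. Since \eqref{E:1.1} is invariant under hyperbolic isometries, the family $U \circ \tau^{(i)}_t$ (with $\tau^{(i)}_t$ the flow of $V_i$) consists of solutions, and differentiating at $t=0$ yields $L(V_i U)=0$. A direct computation in polar coordinates is also available: writing $U=U(\rho)$ with $\rho=|x|$, one finds $\Phi_i(x) = (1-\rho^2)\,U'(\rho)\,\omega_i$ where $\omega_i=x_i/|x|$, which in the hyperbolic radial variable $r$ (so $\rho=\tanh(r/2)$) becomes $2U_r(r)\,\omega_i$; then $L\Phi_i=0$ follows by differentiating the radial ODE for $U$ in $r$.

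For the reverse inclusion, exploit that $L$ commutes with rotations and decompose via spherical harmonics: any $\Phi \in \ker L$ splits as $\Phi(r,\omega) = \sum_{k\ge 0}\sum_{\ell}\phi_{k,\ell}(r)\,Y_{k,\ell}(\omega)$, with each profile satisfying the Sturm--Liouville problem
\[
L_k\phi := -\phi''-(N-1)\coth(r)\,\phi' + \frac{k(k+N-2)}{\sinh^2 r}\,\phi - \lambda\,\phi - pU^{p-1}\phi = 0
\]
on $(0,\infty)$, subject to regularity at $r=0$ and $H^1$-decay at infinity. In the $k=1$ mode the computation of part one produces the solution $U_r$, which is sign-definite on $(0,\infty)$ (negative since $U$ is radially decreasing); Sturm oscillation theory then identifies it as the ground state, giving a one-dimensional ODE kernel, and summing over the $N$ degree-$1$ spherical harmonics recovers exactly $\mathrm{span}\{\Phi_i\}$. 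For $k\ge 2$, the centrifugal coefficient $k(k+N-2)/\sinh^2 r$ strictly dominates the $k=1$ one, so on the space of $H^1$-profiles vanishing at $r=0$ one has $\langle L_k\phi,\phi\rangle > \langle L_1\phi,\phi\rangle \ge 0$, ruling out any kernel.

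The main obstacle is the radial mode $k=0$: one must exclude $H^1$-solutions of $L_0\phi=0$. Here $L_0$ has a strictly negative direction, since $\langle L_0 U,U\rangle = -(p-1)\int_{\bn} U^{p+1}\,dV < 0$, so its lowest radial eigenvalue is negative; the delicate point is to rule out $0$ as a higher radial eigenvalue. I would attempt a Wronskian/Sturm-comparison argument following Kwong's ODE strategy, using $U$ and $U_r$ to set up an integral identity linking $L_0$ and $L_1$ and exploiting the strict sign of $U_r$ on $(0,\infty)$ to force any hypothetical $\phi\in\ker L_0$ to vanish. As a fallback, combining the uniqueness of positive radial solutions from \cite{MS} with a Lyapunov--Schmidt reduction near $U$ would produce a bifurcating branch of radial positive solutions, contradicting uniqueness. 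Since no explicit formula for $U$ is available in the generality considered, neither route is immediate, and this $k=0$ analysis is where the bulk of the technical work of the proof will lie.
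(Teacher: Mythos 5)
Your first inclusion ($L(\Phi_i)=0$, via differentiating $U\circ\tau_{te_i}$ at $t=0$) is exactly the paper's Step 1. Your reduction of the angular modes is a genuinely different route from the paper's: you decompose in spherical harmonics and handle $k=1$ by sign-definiteness of $U_r$ plus Sturm theory and $k\ge 2$ by comparing centrifugal terms, whereas the paper avoids the decomposition entirely by taking odd reflections $\Phi(x)-\Phi(R_\nu x)$, observing these solve the linearized problem on the half-space $\bn_+=\{x_1>0\}$, identifying $\Phi_1|_{\bn_+}$ as the (simple, sign-definite) first eigenfunction of the weighted problem $-\De_{\bn}\psi-\la\psi=\mu U^{p-1}\psi$ there (using the compact embedding into $L^2(U^{p-1}dv_{\bn})$), and then using a Hopf-lemma argument at the origin to conclude that after subtracting $\sum c_i\Phi_i$ the remainder is radial. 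Both reductions are workable; yours is more standard, the paper's avoids justifying convergence of the harmonic expansion and treats all $k\ge 1$ modes in one stroke.

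The genuine gap is the radial mode $k=0$, and you have correctly located it but not closed it. Neither of your two suggestions works as stated. A Kwong-type Wronskian comparison between $L_0$ and $L_1$ is precisely the kind of argument that requires detailed information on $U$, which is unavailable here since $U$ is not explicit (this is the very obstruction the paper highlights relative to \cite{HS}). Your fallback is worse: degeneracy of the linearization at $U$ does not produce a bifurcating branch or contradict uniqueness --- a nontrivial kernel is compatible with $U$ being isolated and unique, absent a parameter with a transversal eigenvalue crossing, so Lyapunov--Schmidt yields no contradiction. The paper closes this mode by the Kabeya--Tanaka perturbation device (Section 5): it introduces $I_\epsilon(u)=\frac12\int[|\nabla_{\bn}u|^2-(\la-\epsilon U^{p-1})u^2]-\frac{1+\epsilon}{p+1}\int|u^+|^{p+1}$, for which $U$ is still a critical point, proves (by redoing the Mancini--Sandeep ODE energy analysis for the perturbed coefficient $\la_\epsilon=\la-\epsilon U^{p-1}$) that $U$ is the \emph{unique} critical point and of mountain pass type, hence of radial Morse index one; but a radial $\Phi\ne 0$ in $\ker L$ satisfies $I_\epsilon''[U](\Phi,\Phi)=\epsilon(1-p)\int U^{p-1}\Phi^2<0$ and is $I_\epsilon''[U]$-orthogonal to the negative direction $U$, forcing radial Morse index at least two --- a contradiction. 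This perturbation step, together with the uniqueness proof for the perturbed ODE, is the technical core of the theorem and is entirely missing from your proposal.
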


Now consider the problem in geodesic balls. Let $B$ be a geodesic ball in $\bn$, consider the problem
\begin{equation}\label{eqn-ball}
\left.
\begin{array}{rlll}
-\De_{\bn} u - \la u &= &|u|^{p-1}u &{\rm in} \; B\\
u &>& 0 &{\rm in} \; B\\
u &=& 0 &{\rm on} \; \partial B
\end{array} \right\}
\end{equation}
We know from \cite{ST} that this problem has a solution under the assumptions \eqref{conditions}.
Using moving plane method (see \cite{ADG},\cite{MS} and the references therein) we know that any solution of \eqref{eqn-ball} is radial and the 
uniqueness of radial solution has been established in \cite{MS}. Let us denote by $u$ the unique positive radial solution of 
\eqref{eqn-ball} and $\tilde L$ the linearized operator
 \begin{equation}\label{lin-ball}
  \tilde L (\Phi) = - \De_{\bn} \Phi - \la \Phi - p u^{p-1} \Phi ,\;\; \Phi \in H^1_0(B)
\end{equation} 
In the case of Euclidean ball, the nondegeneracy of the unique positive solution of semilinear elliptic equations has been studied
 by various authors (see \cite{DGP},\cite{P} and the references therein ). In the hyperbolic case we prove :\\ 
\begin{thm}\label{nondegeneracy-b} Under the assumptions \eqref{conditions}, the unique solution of \eqref{eqn-ball} is 
nondegenerate.
\end{thm}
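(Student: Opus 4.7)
The plan is to exploit the radial symmetry of $u$ to decompose any $\Phi \in \ker \tilde L \subset H^1_0(B)$ into spherical harmonic modes $\Phi(x) = \sum_{k \geq 0} \phi_k(r) Y_k(\theta)$, where $Y_k$ runs over eigenfunctions of $-\Delta_{S^{N-1}}$ with eigenvalue $k(k+N-2)$, and to treat each mode separately. For each $k$, the coefficient $\phi_k$ satisfies a second-order radial Sturm--Liouville equation on $(0,R)$ with $\phi_k(R) = 0$ and the usual regularity at the origin; the task is to show $\phi_k \equiv 0$ for every $k \geq 0$.

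For modes $k \geq 1$ I would feed in Theorem \ref{T:3.1}. Applied to the radial $u$, one computes
\[
V_i(u)(x) = \frac{x_i}{|x|}(1-|x|^2)\, u'(|x|),
\]
so $\psi(r) := -(1-r^2) u'(r)$ is a solution of the mode-$1$ radial ODE on the whole interval $(0,R]$. Since $u$ is radially decreasing with $u'(R) < 0$ by Hopf's lemma, $\psi > 0$ on $(0,R]$ while $\psi(0) = 0$. Because $\psi$ is a positive solution of the mode-$1$ ODE that does \emph{not} vanish at the outer endpoint, standard Sturm--Liouville comparison (Picone's identity, or equivalently a Wronskian computation against a would-be first Dirichlet eigenfunction $\chi > 0$, yielding $\mu_{1} \int \psi \chi = -\psi(R)\chi'(R)|\partial B| > 0$) shows the first Dirichlet eigenvalue of the mode-$1$ radial operator on $B$ is strictly positive, so its kernel is trivial. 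For $k \geq 2$ the mode-$k$ radial operator equals the mode-$1$ operator plus the strictly positive potential $[k(k+N-2) - (N-1)]/\sinh^2 s$, so its first Dirichlet eigenvalue is even larger and its kernel is also trivial.

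The main obstacle is the radial mode $k = 0$, for which Theorem \ref{T:3.1} provides no comparison function. I would handle it by a shooting argument. Let $u_\alpha$ denote the solution of the radial ODE with $u_\alpha(0) = \alpha$, $u_\alpha'(0) = 0$, and let $T(\alpha)$ be its first zero. The uniqueness result of \cite{MS} furnishes a unique $\alpha^\ast$ with $T(\alpha^\ast) = R$, and the function $v := \partial_\alpha u_\alpha|_{\alpha^\ast}$ is the unique (up to scaling) solution of the linearized radial equation regular at the origin. Differentiating $u_\alpha(T(\alpha)) \equiv 0$ at $\alpha^\ast$ gives $v(R) = -u'(R)\, T'(\alpha^\ast)$, so the existence of a nontrivial radial $H^1_0$ kernel element is equivalent to $T'(\alpha^\ast) = 0$. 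The crux is therefore to show strict monotonicity of the shooting map $\alpha \mapsto T(\alpha)$ at $\alpha^\ast$; I expect this to follow by revisiting the Pohozaev/energy identities underlying the uniqueness proof of \cite{MS}, tracking the strict inequality they produce along the continuous family $u_\alpha$. Once $T'(\alpha^\ast) \neq 0$ is secured, the three regimes combine to yield $\ker \tilde L = \{0\}$.
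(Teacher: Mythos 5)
Your treatment of the non-radial modes is sound and is essentially the paper's argument in different packaging: the paper antisymmetrizes $\Phi$ across the hyperplane $\{x_1=0\}$ and uses $v(x)=\frac{x_1}{|x|}(1-|x|^2)\tilde u'(|x|)$ as a signed solution of $\tilde L v=0$ in the half-ball $B^+$ that does \emph{not} vanish on $\partial B^+\cap\partial B$, concluding (via \cite{DGP}) that the first Dirichlet eigenvalue of $\tilde L$ on $B^+$ is positive, so the antisymmetric part vanishes and $\Phi$ is radial. Your spherical-harmonics version with the same comparison function $\psi=-(1-r^2)u'$ for the $k=1$ mode and monotonicity in $k$ for $k\ge 2$ accomplishes the same thing.

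The genuine gap is the radial mode $k=0$, which is in fact the hard part of the theorem. You reduce it to $T'(\alpha^\ast)\neq 0$ for the shooting map and then say you ``expect'' this to follow from the Pohozaev/energy identities of \cite{MS}. This is not a proof, and the reduction itself does not close the loop: uniqueness of the positive solution for every geodesic ball gives at most that $\alpha\mapsto T(\alpha)$ is injective, hence strictly monotone, but a strictly monotone differentiable function can still have $T'(\alpha^\ast)=0$ (as $t\mapsto t^3$ does at $0$), and nonvanishing of the derivative is exactly equivalent to the radial nondegeneracy you are trying to prove. So the crux is asserted, not established. The paper instead proves radial nondegeneracy (Theorem \ref{nondegeneracy-r-b}) by a quite different device: it introduces the perturbed functional
\begin{equation*}
I_{\epsilon}(u)=\frac12\int\left[|\nabla_{\bn}u|^2-(\la-\epsilon U^{p-1})u^2\right]-\frac{1+\epsilon}{p+1}\int|u^+|^{p+1},
\end{equation*}
proves (Section 5, Theorems \ref{exis-per} and \ref{uni-per}) that for small $\epsilon>0$ the original solution is its \emph{unique} critical point and is of mountain pass type, hence has radial Morse index one; a nontrivial radial kernel element $\Phi$ together with $U$ would span a two-dimensional subspace on which $I_\epsilon''[U]$ is negative definite (since $I_\epsilon''[U](U,U)=(1+\epsilon)(1-p)\int U^{p+1}<0$ and $I_\epsilon''[U](\Phi,\Phi)=\epsilon(1-p)\int U^{p-1}\Phi^2<0$, with the cross term vanishing), a contradiction. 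The uniqueness for the perturbed equation is itself a nontrivial ODE argument occupying Section 5. To complete your proof you would need either to supply that machinery or to actually carry out the monotonicity-of-$T$ analysis, neither of which is done in the proposal.
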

\begin{rem} Both Theorem \ref{T:3.1} and Theorem \ref{nondegeneracy-b} extends to the case $\la = \left(\frac{N-1}{2}\right)^2$, but in the first case
we have to work with the energy space  introduced in \cite{MS} instead of $H^1(\bn)$.\\\\
\end{rem}
\section{Notations and Preliminaries}
In this section we will introduce some of the notations and definitions used in this paper and also recall some of the embeddings
 related to the Sobolev space in the hyperbolic space.\\\\
We will denote by $\bn$ the disc model of the hyperbolic space, i.e., the unit disc equipped with 
the Riemannian metric $g = \sum\limits_{i=1}^N(\frac{2}{1-|x|^2})^2dx_i^2$.\\
The corresponding volume element is given by $dv_{\bn} = (\frac{2}{1-|x|^2})^Ndx $.  
The hyperbolic gradient $\nabla_{\bn}$ and the hyperbolic Laplacian $\De_{\bn}$ are given by
$$
 \nabla_{\bn}=(\frac{1-|x|^2}{2})^2\nabla,\ \ \  \De_{\bn}=(\frac{1-|x|^2}{2})^2\De+(N-2)\frac{1-|x|^2}{2}<x,\nabla>
$$\\
{\bf Sobolev Space.} We will denote by ${H^{1}}(\bn)$ the Sobolev space on the disc model of the Hyperbolic space $\bn$ and 
 $H^{1}_r(\bn)$ denotes the subspace of $H^{1}(\bn)$ consisting of radial functions. For a geodesic ball $B$ in $\bn$ we will
 denote by $H^1_0(B)$ the completion of $C_c^\infty(B)$ with the norm $||u||^2 = \int\limits_{B} |\nabla_{\bn}  u|^2$ and 
$H^1_{0,r}(B)$ its subspace consisting of radial functions.\\\\
Let us recall the following Sobolev embedding theorem (see \cite{MS} for a proof):\\\\
{\bf Poincar\'e-Sobolev inequality:}
For every $N \geq 3$ and every $p\in (1, \frac{N+2}{N-2}] $ there is a constant $S_{N,p}>0$ such that
\begin{equation}\label{PS2}
 S_{N,p} \left(\int\limits_{\bn}  |u|^{p+1} dv_{\bn} \right)^{\frac{2}{p+1}} \leq 
\int\limits_{\bn} \left[ |\nabla_{\bn}  u|^2  - {\frac{(N-1)^2 }{4}} u^2 \right]  dv_{\bn} \quad 
\end{equation}
for every $ u\in H^{1}(\bn). \: \: $ If $N=2$ any $p> 1$ is allowed.\\\\
In general the embedding $H^{1}(\bn) \hookrightarrow L^{p+1}(\bn)$ is not compact, however for radial functions, using the point 
wise estimate we have the following compactness lemma (see \cite{PS} for a proof):\\
\begin{lem}\label{cpt-embedding}The embedding
$H^{1}_r(\bn) \hookrightarrow L^{p+1}(\bn)$ is compact if $1<p< \frac{N+2}{N-2}$ and $N\ge 3$ or $1<p<\infty$ and $N=2$.
\end{lem}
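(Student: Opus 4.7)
The plan is to adapt Strauss's classical argument to the hyperbolic setting, deriving a pointwise radial decay estimate that dominates the exponential volume growth at infinity, and combining this with local Rellich--Kondrachov compactness on geodesic balls.

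First I would work in hyperbolic polar coordinates $x = \tanh(t/2)\omega$ with $t \ge 0$ the hyperbolic distance to the origin and $\omega \in S^{N-1}$, so that $dv_{\bn} = (\sinh t)^{N-1} dt\, d\omega$ (up to the area of $S^{N-1}$) and $|\nabla_{\bn} u|^2 = |u'(t)|^2$ for a radial $u$. Since $u \in H^1(\bn)$ vanishes at infinity in the averaged sense forced by $\int u^2\,dv_{\bn}<\infty$, I can write $u(t) = -\int_t^\infty u'(s)\,ds$. Cauchy--Schwarz then gives
\begin{equation*}
|u(t)|^2 \le \left(\int_t^\infty (\sinh s)^{-(N-1)}\,ds\right)\left(\int_t^\infty |u'(s)|^2(\sinh s)^{N-1}\,ds\right) \le C\,\|u\|_{H^1(\bn)}^2\,e^{-(N-1)t}
\end{equation*}
for $t\ge 1$, using $(\sinh s)^{-(N-1)} \sim 2^{N-1}e^{-(N-1)s}$ as $s\to\infty$. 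This is the hyperbolic Strauss-type estimate: radial $H^1$ functions decay pointwise like $e^{-(N-1)t/2}$.

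With this in hand, given a bounded sequence $\{u_n\}\subset H^1_r(\bn)$, I would extract a subsequence with $u_n \rightharpoonup u$ weakly in $H^1_r(\bn)$. For each fixed $R>0$, Rellich--Kondrachov on the compact Riemannian manifold-with-boundary $\overline{B_R}$ (the closed geodesic ball of radius $R$) yields $u_n \to u$ strongly in $L^{p+1}(B_R)$ for $1<p<\frac{N+2}{N-2}$ (resp.\ $p<\infty$ when $N=2$). The decay estimate then produces a uniform tail bound:
\begin{equation*}
\int_{\bn\setminus B_R} |u_n|^{p+1}\,dv_{\bn} \le C\int_R^\infty e^{-(N-1)(p+1)t/2}(\sinh t)^{N-1}\,dt \le C\,e^{-(N-1)(p-1)R/2},
\end{equation*}
which vanishes as $R\to\infty$ uniformly in $n$. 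Combining the local strong convergence with this tail estimate (applied also to $u$) concludes $u_n \to u$ in $L^{p+1}(\bn)$.

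I expect the crux is the delicate balance in the tail step: the exponential volume growth $(\sinh t)^{N-1}\sim e^{(N-1)t}$ of the hyperbolic space is very strong, and it is controlled only because the radial decay $|u_n(t)|\lesssim e^{-(N-1)t/2}$ gets raised to the power $p+1$. The resulting exponent $(N-1)(p-1)/2$ is positive precisely when $p>1$, which is exactly the hypothesis of the lemma and explains why the case $p=1$ must be excluded. The upper bound on $p$, by contrast, plays no role in the tail estimate; it enters only through the Rellich--Kondrachov step on bounded geodesic balls, where subcriticality is required.
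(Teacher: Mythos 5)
Your proof is correct and follows essentially the same route the paper indicates: the lemma is quoted from \cite{PS} precisely as a consequence of ``the point wise estimate,'' namely the hyperbolic Strauss-type decay $|u(t)|\le C e^{-(N-1)t/2}\|u\|_{H^1(\bn)}$ for radial functions, combined with local Rellich--Kondrachov compactness and the uniform tail bound in which the decay raised to the power $p+1$ beats the volume growth $(\sinh t)^{N-1}$ exactly when $p>1$. Your closing remarks on where each hypothesis ($p>1$ for the tail, subcriticality for the local step) enters are also accurate.
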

For a positive function $V$ on $\bn$ define: $$L^{2}(\bn , V dv_{\bn})= \left\{ u : \int_{\bn}u^2\;Vdv_{\bn}<\infty \right\}$$ 
then we have the following compact embedding in the general case:\\
\begin{prop}\label{P:3.2}
 The Embedding $H^{1}(\bn) \hookrightarrow L^{2}(\bn , V dv_{\bn})$ is compact if $V(x)\rightarrow 0$ as $x\rightarrow \infty$. 
\end{prop}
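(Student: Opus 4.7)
The plan is the standard ``compact core plus decaying tail'' argument. Take a bounded sequence $\{u_n\} \subset H^1(\bn)$; by weak compactness, passing to a subsequence, $u_n \rightharpoonup u$ weakly in $H^1(\bn)$. Setting $w_n := u_n - u$, it suffices to show $w_n \to 0$ strongly in $L^2(\bn, V\,dv_{\bn})$. Since $\{u_n\}$ is bounded in $H^1(\bn)$, in particular $M := \sup_n \|w_n\|_{L^2(\bn)} < \infty$.

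Given $\epsilon > 0$, the hypothesis $V(x) \to 0$ as $x \to \infty$ lets us choose a hyperbolic geodesic ball $B_R$ centered at the origin such that $V(x) < \epsilon$ on $\bn \setminus B_R$. The exterior contribution is then controlled immediately:
\begin{equation*}
\int_{\bn \setminus B_R} w_n^2\, V \, dv_{\bn} \;\leq\; \epsilon \int_{\bn} w_n^2 \, dv_{\bn} \;\leq\; \epsilon M^2.
\end{equation*}

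For the interior piece, note that in the disc model the hyperbolic ball $B_R$ coincides with a Euclidean ball $\{|x| \leq \rho_R\}$ with $\rho_R < 1$, and on this set the hyperbolic metric, the gradient $\nabla_{\bn}$, and the volume element $dv_{\bn}$ are all uniformly equivalent to their Euclidean analogues. Consequently $\{w_n|_{B_R}\}$ is bounded in the classical Euclidean Sobolev space $H^1(\{|x|\leq \rho_R\})$, and the Rellich--Kondrachov theorem (along a further subsequence) gives $w_n \to 0$ strongly in $L^2(B_R,\,dv_{\bn})$. Assuming $V \in L^\infty_{\mathrm{loc}}(\bn)$, which is implicit in the statement, this yields
\begin{equation*}
\int_{B_R} w_n^2\, V \, dv_{\bn} \;\leq\; \|V\|_{L^\infty(B_R)}\, \|w_n\|_{L^2(B_R)}^2 \;\to\; 0.
\end{equation*}

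Combining the two bounds, $\limsup_n \int_{\bn} w_n^2\, V \, dv_{\bn} \leq \epsilon M^2$, and since $\epsilon > 0$ is arbitrary, $w_n \to 0$ in $L^2(\bn, V\,dv_{\bn})$. The step deserving most care is bridging the hyperbolic and Euclidean pictures on $B_R$: one must verify that the uniform equivalence of the two metrics on compactly contained subsets of the Poincar\'e disc genuinely reduces the interior compactness to the classical Rellich--Kondrachov theorem, and that boundedness in $H^1(\bn)$ supplies the uniform $L^2(\bn)$ bound $M$ needed to absorb the tail. Once this local equivalence is in hand the rest of the argument is routine.
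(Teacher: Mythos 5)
Your proof is correct and follows essentially the same route as the paper: extract a weakly convergent subsequence, split the integral into a compact core handled by Rellich's theorem and a tail where $V<\epsilon$, and absorb the tail using the uniform $L^{2}(\bn)$ bound coming from the $H^{1}(\bn)$ bound (the paper invokes the Poincar\'e--Sobolev inequality for this last point). The only cosmetic difference is that you work with $w_n=u_n-u$ while the paper estimates $\int |u_n^2-u^2|V\,dv_{\bn}$ directly; your explicit remark that $V$ must be locally bounded is a reasonable reading of the (implicit) hypotheses.
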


\begin{proof}
 
Let $\{u_{n}\}_{n} \in H^{1}(\bn)$ be such that $||u_{n}||_{H^{1}(\bn)} \leq C$ for some $C>0$. Then upto a subsequence
$u_{n}\rightharpoonup u$ in $H^{1}(\bn)$  and pointwise. To complete the proof we need to show $u_{n} \longrightarrow u$
 in $L^{2}(\bn, V dv_{\bn})$. Let $\epsilon >0$, since $V(x) \rightarrow 0$ as $|x| \rightarrow \infty$ 
 there exists $R_{\epsilon} < 1$ such that $V(x) < \epsilon$ for all $|x| \geq R_{\epsilon}$ . Thus
\begin{eqnarray*}
 \left| \int_{\bn} u_{n}^{2} V dv_{\bn} - \int_{\bn} u^{2} V dv_{\bn}\right| &&\leq \int_{\bn} |u_{n}^{2} - u^{2}| V dv_{\bn} \\
\end{eqnarray*}
\begin{eqnarray*}
       && =  \int_{|x|\leq R_{\epsilon}} |u_{n}^{2} - u^{2}| V dv_{\bn} + \int_{|x|\geq R_{\epsilon}} |u_{n}^{2} - u^{2}| V dv_{\bn}
\end{eqnarray*}
 The first integral converges to zero as $n\rightarrow \infty$ thanks to Rellich's compactness theorem. Using Poincar\'e-Sobolev 
inequality the second integral can be estimated  as follows:
\begin{eqnarray*}
 \int_{|x| \geq R_{\epsilon}} |u_{n}^{2} - u^{2}| V dv_{\bn} && < \epsilon \int_{|x|\geq R_{\epsilon}} |u_{n}^{2} - u^{2}| dv_{\bn}\\
&& \leq \epsilon\left[\int_{|x|\geq R_{\epsilon}} u_{n}^{2} dv_{\bn} + \int_{|x|\geq R_{\epsilon}} u^{2} dv_{\bn}\right]
  < C \epsilon 
\end{eqnarray*}
Since $\epsilon$ is arbitrary, this implies $u_{n} \rightarrow u$  strongly in $L^{2}({\bn, V dv_{\bn}}).$
\end{proof}
We also need some information on the isometries of $\bn$. Below we recall the definition of a special type of isometries namely the 
hyperbolic translations. For more details on the isometry group of $\bn$ we refer to \cite{JR}.\\
{\bf Hyperbolic translations} For $b\in \bn$ define
\begin{equation}
 \tau_{b}(x) = \frac{(1 - |b|^{2})x + (|x|^{2} + 2 x.b + 1)b}{|b|^{2}|x|^{2} + 2x.b +1}
\end{equation}
then $\tau_{b}$ is an isometry of $\bn$ with $\tau_{b}(0) = b$. The map $\tau_{b}$ is called the hyperbolic translation of $\bn$ by $b$ .  
\section{Radial Nondegeneracy} In this section we prove the radial nondegeneracy of the linearized operators. First consider the case of
 \eqref{E:1.1}:
\begin{thm}\label{nondegeneracy-r} Let $\Phi \in H^{1}_r(\bn)$ satisfies $L(\Phi)=0$ then $\Phi =0.$
\end{thm}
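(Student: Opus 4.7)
The plan is to reduce $L(\Phi) = 0$ to a second-order linear ODE in the hyperbolic radial variable $\rho = d_{\bn}(0,\cdot)$ and to combine a Wronskian identity with $U$ with the Fredholm-type constraint that follows from the self-adjointness of $L$.

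By elliptic regularity $\Phi$ is smooth, and the radial expression of $\De_{\bn}$ recalled in Section~2 turns $L(\Phi) = 0$ into
\begin{equation*}
\Phi''(\rho) + (N-1)\coth(\rho)\,\Phi'(\rho) + \bigl(\lambda + p U^{p-1}(\rho)\bigr)\Phi(\rho) = 0, \qquad \rho > 0.
\end{equation*}
The origin is a regular singular point of this ODE, so $H^{1}_r$-regularity forces $\Phi(0)$ finite and $\Phi'(0) = 0$; Cauchy uniqueness then rules out $\Phi(0)=0$ unless $\Phi \equiv 0$, and I may normalise $\Phi(0) > 0$. The identity $\langle L(U),\Phi\rangle = \langle U, L(\Phi)\rangle$ combined with $L(U) = -(p-1)U^p$ and the vanishing of boundary terms at infinity (by the $H^1$-decay of $U$ and $\Phi$ together with the Poincar\'e--Sobolev embedding) yields the orthogonality
\begin{equation*}
\int_{\bn} U^p\,\Phi \, dv_{\bn} = 0.
\end{equation*}

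Next, set $W(\rho) := \sinh^{N-1}(\rho)\bigl(U'(\rho)\Phi(\rho) - U(\rho)\Phi'(\rho)\bigr)$. A direct computation using the two ODEs gives $W'(\rho) = (p-1)\sinh^{N-1}(\rho)\,U^p(\rho)\,\Phi(\rho)$, and the regularity at the origin together with the orthogonality above yields $W(0) = W(\infty) = 0$. If $\Phi$ were of constant sign on $(0,\infty)$, then $W'$ would be strictly positive (resp.\ strictly negative) pointwise, which is incompatible with $W$ vanishing at both endpoints. Hence $\Phi$ must change sign.

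The main obstacle is the sign-changing case; let $\rho_0 > 0$ be the first zero of $\Phi$. To conclude I would exploit the variational characterisation of $U$ as the unique positive minimiser of the energy $I(u) = \tfrac{1}{2}\int(|\nabla_{\bn}u|^2 - \lambda u^2)\,dv_{\bn} - \tfrac{1}{p+1}\int|u|^{p+1}\,dv_{\bn}$ on the radial Nehari manifold $\mathcal{N}_r$. The orthogonality above makes $\Phi$ tangent to $\mathcal{N}_r$ at $U$, and $L(\Phi)=0$ makes it a zero direction of the constrained Hessian; expanding $I$ along the Nehari-projected curve $t \mapsto s(t)(U + t\Phi) \in \mathcal{N}_r$ produces a leading cubic of the form $-\tfrac{p(p-1)}{6}\bigl(\int U^{p-2}\Phi^3\,dv_{\bn}\bigr)\,t^3$, which, whenever nonzero, contradicts the minimality of $U$ for an appropriate sign of $t$. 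The residual degenerate situation $\int U^{p-2}\Phi^3 = 0$ is where the argument becomes delicate; I would treat it by refining the Wronskian analysis on $[0,\rho_0]$ and $[\rho_0,\infty)$ via a Sturm--Picone comparison against the profiles $U$ and $U' \in \ker L_1$ (the linearised operator in the $\ell = 1$ spherical-harmonic sector), producing a second independent constraint that precludes the cancellation.
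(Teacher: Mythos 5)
Your reduction to the radial ODE, the orthogonality $\int_{\bn}U^{p}\Phi\,dv_{\bn}=0$, and the Wronskian computation $W'(\rho)=(p-1)\sinh^{N-1}(\rho)\,U^{p}\Phi$ forcing a nontrivial radial kernel element to change sign are all sound (though $W(\infty)=0$ needs the exponential decay of $\Phi$ and $\Phi'$, which does not follow from $H^{1}$ membership alone and requires an ODE asymptotics argument for the linearized equation). The genuine gap is the concluding step. The cubic expansion of the energy along the Nehari manifold has coefficient proportional to $\int_{\bn}U^{p-2}\Phi^{3}\,dv_{\bn}$, and you acknowledge that when this integral vanishes the argument yields nothing; the proposed fallback --- a Sturm--Picone comparison against $U$ and the $\ell=1$ kernel producing a ``second independent constraint'' --- is only a sketch, and it is not explained what that constraint is or why it excludes the degenerate case. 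There is also a secondary problem: for $1<p<2$ (and even for $p<3$ in the cubic term) the map $u\mapsto|u|^{p+1}$ is not three times differentiable where $U+t\Phi$ vanishes, which happens for every $t\neq 0$ since $\Phi$ changes sign while $U$ decays; so the validity of the third-order Taylor expansion itself needs justification. As written, the proof is incomplete.

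It is worth seeing how the paper sidesteps exactly the difficulty you ran into. Working with the unperturbed functional one has $I''[U](\Phi,\Phi)=0$ identically, which is what forces you into third-order analysis. The paper instead perturbs the functional to $I_{\epsilon}$ (Theorem \ref{morseindex}): $U$ remains a critical point, but now $I_{\epsilon}''[U](\Phi,\Phi)=\epsilon(1-p)\int_{\bn}U^{p-1}\Phi^{2}<0$ strictly, and together with $I_{\epsilon}''[U](U,U)=(1+\epsilon)(1-p)\int_{\bn}U^{p+1}<0$ and the orthogonality $\int_{\bn}U^{p}\Phi=0$ (the same identity you derived), the span of $U$ and $\Phi$ is a two-dimensional subspace on which $I_{\epsilon}''[U]$ is negative definite. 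This contradicts the fact that $U$ is the unique critical point of $I_{\epsilon}$ and of mountain pass type, hence of radial Morse index one (Theorems \ref{exis-per} and \ref{uni-per}). The cost of that route is the uniqueness theorem for the perturbed ODE, which is where the real work of Section 5 lies. To salvage your approach you would need either an honest argument closing the case $\int_{\bn}U^{p-2}\Phi^{3}\,dv_{\bn}=0$, or a perturbation device of this kind that turns the null direction into a strictly negative one.
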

Similarly for \eqref{eqn-ball} we have:
\begin{thm}\label{nondegeneracy-r-b} Let $\Phi \in H^1_{0,r}(B)$ satisfies $\tilde L(\Phi)=0$ then $\Phi =0.$
\end{thm}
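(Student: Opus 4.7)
The plan is to reduce the PDE to a radial ODE in the geodesic variable, observe that this ODE has a one-dimensional space of solutions regular at the centre, and then use the uniqueness of positive radial Dirichlet solutions from \cite{MS} to force the only admissible $\Phi$ to vanish.

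First I would write $\Phi$ in the geodesic radial variable $r$, so that $\tilde L \Phi = 0$ becomes
\[
\Phi'' + (N-1)\coth(r)\,\Phi' + (\lambda + p u^{p-1})\,\Phi = 0, \qquad r \in (0, R),
\]
with $\Phi'(0) = 0$ (radial regularity) and $\Phi(R) = 0$, where $R$ is the geodesic radius of $B$. At $r = 0$ this is a regular singular point with indicial exponents $0$ and $2 - N$, so the solutions bounded near the origin form a one-dimensional subspace, spanned by the unique $\psi$ with $\psi(0) = 1$, $\psi'(0) = 0$. Consequently $\Phi = \Phi(0)\,\psi$, and the theorem reduces to showing $\psi(R) \neq 0$.

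To establish $\psi(R) \neq 0$, I would introduce the one-parameter family of initial value problems
\[
u_\alpha'' + (N-1)\coth(r)\,u_\alpha' + \lambda u_\alpha + u_\alpha^p = 0, \qquad u_\alpha(0) = \alpha,\ u_\alpha'(0) = 0,
\]
and let $R(\alpha)$ be the first positive zero of $u_\alpha$. Smooth dependence on initial data gives that $\psi_\alpha := \partial u_\alpha / \partial \alpha$ solves the linearized ODE with $\psi_\alpha(0) = 1, \psi_\alpha'(0) = 0$, and that $\psi_{\alpha_0} = \psi$ for $\alpha_0 := u(0)$. Differentiating the relation $u_\alpha(R(\alpha)) = 0$ at $\alpha_0$ yields $\psi(R) = -u'(R)\,R'(\alpha_0)$, so in view of $u'(R) \neq 0$ (Hopf lemma) the nondegeneracy is equivalent to $R'(\alpha_0) \neq 0$. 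By the uniqueness theorem of \cite{MS} the map $\alpha \mapsto R(\alpha)$ is injective on its domain, hence strictly monotonic by continuity.

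The main obstacle is to upgrade strict monotonicity of $R$ to the pointwise statement $R'(\alpha_0) \neq 0$, since a smooth strictly monotonic function can in principle have isolated critical points. I plan to close this gap by combining sign-change information for $\Phi$ with a bifurcation argument. Self-adjointness of $\tilde L$ and the identity $\tilde L u = -(p-1) u^p$ yield the orthogonality
\[
\int_B u^p \,\Phi\, dv_{\bn} = 0,
\]
so any nontrivial $\Phi$ must change sign in $(0, R)$. In the putative degenerate case $R'(\alpha_0) = 0$, a Lyapunov--Schmidt expansion of $u_\alpha$ about $\alpha_0$, driven by the quadratic term $p(p-1)u^{p-2}\psi^2$ coming from the nonlinearity, produces a second branch of positive radial Dirichlet solutions near $u$, contradicting \cite{MS}. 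An alternative route is a direct Picone/Sturm comparison of $\Phi$ with $u'$, which satisfies the shifted radial ODE
\[
v'' + (N-1)\coth(r)\,v' + \bigl[\lambda + p u^{p-1} - (N-1)/\sinh^2(r)\bigr]\,v = 0
\]
and has a single zero (at $r = 0$) on $[0, R]$. In either approach the technical heart is exploiting the strict positivity of $(p-1) u^{p-1}$ on $B$, which is the same structural input that underlies the uniqueness theorem of \cite{MS}.
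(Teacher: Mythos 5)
Your reduction is fine up to the point you yourself flag: the passage to the radial ODE, the one-dimensionality of the solutions regular at the origin, the identity $\psi(R)=-u'(R)\,R'(\alpha_0)$, and the orthogonality $\int_B u^p\Phi\,dv_{\bn}=0$ are all correct. But the gap you identify is the entire content of the theorem, and neither of your two proposed remedies closes it. (a) A Lyapunov--Schmidt expansion at a degenerate critical point does \emph{not} automatically produce a second branch of solutions: the reduced equation has the form $\beta_2 s^2+O(s^3)=0$ with $\beta_2$ proportional to $\int_B u^{p-2}\psi^3$, and if $\beta_2\neq 0$ then $s=0$ is its only small root, which is perfectly consistent with $R(\cdot)$ being strictly monotone with an isolated inflection at $\alpha_0$ (e.g.\ $R(\alpha)-R(\alpha_0)\sim c(\alpha-\alpha_0)^3$); no contradiction with the uniqueness theorem of \cite{MS} results. (b) The Sturm comparison runs in the wrong direction: the potential in the equation for $\Phi$ exceeds the potential in the equation for $u'$ by $(N-1)/\sinh^2(r)>0$, so Sturm's theorem places a zero of $\Phi$ between consecutive zeros of $u'$ --- but $u'$ vanishes only at $r=0$ on $[0,R]$, so this yields nothing. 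To squeeze information out of $u'$ you would need the inequality reversed. In short, "uniqueness for every radius $\Rightarrow$ $R$ injective $\Rightarrow$ $R$ strictly monotone" cannot be upgraded to $R'(\alpha_0)\neq 0$ without an independent oscillation analysis of $\psi$ (showing it has exactly one interior zero and a definite sign at $r=R$), which is precisely the hard step and is absent.

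The paper sidesteps this issue entirely by the Kabeya--Tanaka device \cite{US}: one perturbs the functional to $I_\epsilon$, replacing $\la$ by $\la-\epsilon u^{p-1}$ and the nonlinearity by $(1+\epsilon)|u|^{p-1}u$, so that $u$ is still a solution; Section 5 shows (by rerunning the \cite{MS} energy/comparison argument for the perturbed ODE) that for small $\epsilon>0$ it is the \emph{unique} positive radial critical point and is of mountain pass type, hence has radial Morse index one. If a nontrivial radial $\Phi$ with $\tilde L(\Phi)=0$ existed, the perturbation turns the degenerate direction into a strictly negative one, $I_\epsilon''[u](\Phi,\Phi)=\epsilon(1-p)\int u^{p-1}\Phi^2<0$, while $I_\epsilon''[u](u,u)<0$ and the cross term vanishes by your orthogonality identity; this produces a two-dimensional negative subspace and contradicts Morse index one. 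You should either adopt that argument or replace your final step by a genuine Kwong--Li type proof that $\psi(R)\neq 0$.
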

We will prove Theorem \ref{nondegeneracy-r}, the proof of Theorem \ref{nondegeneracy-r-b} follows similarly with obvious modifications. 
As in \cite{US} we use the information on the Morse index of a perturbed problem to establish the result.\\
\begin{thm}\label{morseindex} Let $U$ be the unique positive radial solution of \eqref{E:1.1}, and consider for 
$\epsilon >0, I_{\epsilon}$ defined on $H^{1}_r(\bn)$ by 
\begin{equation}
I_{\epsilon}(u) = \frac{1}{2} \int_{\bn} \left[|\nabla_{\bn} u|^{2} - (\la-\epsilon U^{p-1}) u^2\right] - 
\frac{1+\epsilon}{p+1} \int_{\bn} |u^+|^{p+1}
\end{equation}
then there exists an $\epsilon >0$ such that $u=U$ is the unique critical point of $I_{\epsilon}, U$ is a mountain pass critical point
and hence of radial Morse index one.
\end{thm}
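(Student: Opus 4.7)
The strategy, following \cite{US}, is to use $I_\epsilon$ as a diagnostic for the Morse-theoretic structure at $U$: the perturbation is deliberately designed so that $U$ remains a critical point of $I_\epsilon$ while simultaneously producing a negative direction for the Hessian, allowing one to read off the Morse index from the mountain pass structure. Indeed, the Euler-Lagrange equation of $I_\epsilon$ is $-\De_{\bn} u - (\la - \epsilon U^{p-1})u = (1+\epsilon)(u^+)^p$, which at $u=U$ reduces to the defining equation $-\De_{\bn} U - \la U = U^p$. For small $\epsilon>0$ the quadratic part of $I_\epsilon$ is coercive by the Poincar\'e-Sobolev inequality \eqref{PS2} (since $\la - \epsilon U^{p-1} \leq \la < ((N-1)/2)^2$); standard mountain pass geometry follows on $H^{1}_r(\bn)$ and the Palais-Smale condition holds by the radial compact embedding of Lemma \ref{cpt-embedding}. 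Testing $I'_\epsilon(u)=0$ against $u^-$ and reusing \eqref{PS2} forces $u^-\equiv 0$, so every critical point is nonnegative. A direct computation exploiting $\int_{\bn}(|\nabla_{\bn}U|^2 - \la U^2)\,dv_{\bn} = \int_{\bn}U^{p+1}\,dv_{\bn}$ then yields
\[
\langle I''_\epsilon(U)U, U\rangle = -(p-1)(1+\epsilon)\int_{\bn}U^{p+1}\,dv_{\bn} < 0,
\]
so the radial Morse index of $U$ with respect to $I_\epsilon$ is at least one.

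The heart of the argument is uniqueness of nontrivial critical points of $I_\epsilon$ for some small $\epsilon>0$. I would argue by contradiction: if along a sequence $\epsilon_n \to 0$ there exist distinct nontrivial (hence positive) critical points $u_n^{(1)}\neq u_n^{(2)}$ of $I_{\epsilon_n}$, then a Nehari-type identity gives a uniform lower bound $\|u_n^{(j)}\|_{H^1} \geq c > 0$, standard a priori estimates give boundedness above, and the Palais-Smale argument together with the uniqueness result of \cite{MS} for the unperturbed problem forces $u_n^{(j)} \to U$ strongly in $H^{1}_r(\bn)$. Normalising $\omega_n := (u_n^{(1)} - u_n^{(2)})/\|u_n^{(1)} - u_n^{(2)}\|_{H^1}$ and using Lemma \ref{cpt-embedding} and Proposition \ref{P:3.2} together with the pointwise limit $[(u_n^{(1)})^p - (u_n^{(2)})^p]/[u_n^{(1)} - u_n^{(2)}] \to pU^{p-1}$, one extracts a limit $\omega \in H^{1}_r(\bn)$ with $L\omega = 0$; the fact that $\omega\not\equiv 0$ comes from testing the equation for $\omega_n$ against $\omega_n$ itself and comparing with \eqref{PS2} (the alternative $\omega=0$ would force a bound $\la\,A=1$ with $A \leq ((N-1)/2)^{-2}$, contradicting $\la <((N-1)/2)^2$). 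Excluding such a nontrivial radial kernel element of $L$ is the main obstacle: it is essentially the radial nondegeneracy statement itself, and the plan to close the loop is via a direct analysis of the radial linear ODE $-\omega'' - (N-1)\coth(r)\,\omega' - \la\omega - pU^{p-1}\omega = 0$ with the regularity condition $\omega'(0)=0$ and the decay rate of $U$ at infinity, which should force $\omega\equiv 0$.

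Once uniqueness is in hand, the nontrivial critical point produced by the mountain pass theorem (which exists by the variational framework of the first paragraph) must coincide with $U$, so $U$ is a mountain pass critical point of $I_\epsilon$. The classical upper bound of one on the Morse index of a mountain pass critical point, combined with the lower bound established above, pins the radial Morse index of $U$ with respect to $I_\epsilon$ at exactly one.
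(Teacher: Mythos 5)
Your variational setup (mountain pass geometry from \eqref{PS2}, Palais--Smale via Lemma \ref{cpt-embedding}, the computation $\langle I''_\epsilon(U)U,U\rangle<0$) is consistent with the paper, but the heart of your argument has a genuine gap that is, in fact, a circularity. Your uniqueness proof by contradiction ends with a normalized difference $\omega\in H^1_r(\bn)$, $\omega\not\equiv 0$, satisfying $L\omega=0$, and you then say that excluding such an $\omega$ ``is essentially the radial nondegeneracy statement itself'' and that an ODE analysis ``should force $\omega\equiv 0$.'' But in the paper's logical architecture the radial nondegeneracy (Theorem \ref{nondegeneracy-r}) is a \emph{consequence} of Theorem \ref{morseindex}: the Morse index bound of one is precisely what rules out a nontrivial radial kernel element (the span of $U$ and $\Phi$ would give a two-dimensional negative subspace). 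So you cannot use radial nondegeneracy as an input to prove this theorem, and you have not supplied the promised ODE argument --- which is a nontrivial claim, not a routine verification, since $U$ is not known explicitly (this is exactly the difficulty, noted in the introduction, that prevents the hypergeometric-ODE approach of \cite{HS} from applying in general). The paper avoids the circle entirely: Theorem \ref{uni-per} proves uniqueness of positive radial solutions of the \emph{perturbed} equation \eqref{perturbed} directly by ODE shooting/energy methods adapted from \cite{MS} --- the auxiliary energy $E_{\hat u}$, the monotonicity of $G_\epsilon$ in \eqref{mono}, and a comparison lemma --- with no linearization or nondegeneracy input whatsoever. Uniqueness plus the existence of a mountain pass critical point (Theorem \ref{exis-per}) then identifies $U$ as that critical point, and Hofer's result \cite{MP} gives Morse index one.

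A secondary omission: your Palais--Smale argument only covers the subcritical range. In the critical case $p=\frac{N+2}{N-2}$, $N\ge 4$, the PS condition fails in general, and the paper must show the mountain pass level lies below $\frac{S^{N/2}}{N}$ by a conformal change of metric reducing the problem to a Brezis--Nirenberg functional with positive linear coefficient $a(x)$ (using $\la>\frac{N(N-2)}{4}$ from \eqref{conditions}). Your a priori bounds and convergence $u_n^{(j)}\to U$ in the contradiction argument would likewise be threatened by bubbling in this case. These points would need to be addressed even if the uniqueness step were repaired.
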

We postponed the proof of this theorem to Section 5.\\
{\bf Proof of Theorem \ref{nondegeneracy-r}}. Assume that there exists a $\Phi \not= 0$ such that $L(\Phi) =0.$ We will show that this will
contradict the fact that $U$ is a  critical point of $I_{\epsilon}$ with radial Morse index of one.\\
Recall the Morse Index of $U$ denoted $i (I_{\epsilon}, U)$ is defined as
\[
 i (I_{\epsilon}, U) = \mbox{max} \{\mbox{dim} H : H \subset H^{1}(\bn) \mbox{is a subspace such that}  I_{\epsilon}^{\prime\prime}[U](h, h) < 0\ \forall h \in H \backslash \{0\}\}
\]
We have 
$$I_{\epsilon}^{\prime\prime}[U](v,v) = \int_{\bn} [|\nabla_{\bn} v|^{2} - (\la-\epsilon U^{p-1}) v^2]- (1+\epsilon)p \int_{\bn}U^{p-1} v^2$$
Then $$I_{\epsilon}^{\prime\prime}[U](U,U) = (1+\epsilon)(1-p)\int_{\bn}U^{p+1} <0 \;{\rm and}\; 
I_{\epsilon}^{\prime\prime}[U](\Phi,\Phi) =\epsilon(1-p)\int_{\bn}U^{p-1}\Phi^2 <0.$$\\ 
Since $$\int_{\bn}[\nabla_{\bn}U \cdot \nabla_{\bn}\Phi- \la U \Phi] = \int_{\bn}U^{p}\Phi =0 $$ 
we get for real numbers $ \alpha,\beta$
$$I_{\epsilon}^{\prime\prime}[U](\alpha U+\beta \Phi,\alpha U+\beta \Phi) = \alpha^2I_{\epsilon}^{\prime\prime}[U](U,U) +
\beta^2I_{\epsilon}^{\prime\prime}[U](\Phi,\Phi) <0$$
Moreover $U$ and $\Phi$ are linearly independent and hence the radial Morse index of $U$ is at least $2$, which is a contradiction.

\section{Proof of Theorems} 
In this section we will prove our main theorems.\\
{\bf Proof of Theorem \ref{T:3.1}} We divide the proof in to several steps.\\\\
{\bf Step 1} Let $\Phi_{i}$ be as define in Theorem \ref{T:3.1}, then $L(\Phi_{i})=0$ for all $i.$\\\\
{\it Proof of Step 1.} Let $x \in \bn$ then $V_i(x) =\gamma^\prime(0)$
where $\gamma(t) = \tau_{t e_{i}}(x),\;|t|<1$, where $\{e_{i}\}$ is the standard basis of $\rn$ and $\tau_{t e_{i}}$ is the hyperbolic translation.
Thus $\Phi_{i}(x) = \left[ \frac{d}{dt}|_{t=0} U\circ \tau_{t e_{i}}(x)\right]$. Recall that if $U$ is a solution of \ref{E:1.1},
then so is $V = U\circ \tau$ for any isometry $\tau$ of $\bn$. In particular
\begin{equation}\label{E:2.6}
 -\De_{\bn} (U\circ \tau_{te_i}) - \la U\circ\tau_{te_i} = (U\circ\tau_{te_i})^{p}
\end{equation}
Differentiating with respect to $t$ and evaluating at $t=0$ we see that $L(\Phi_{i})=0$ where
\begin{eqnarray}\label{E:2.7}
 \Phi_{i} &&= \left[ \frac{d}{dt}|_{t=0} U(\tau_{t e_{i}})\right]=<\nabla_{\bn} U(x) , e_{i}>_{\bn} (|x|^{2} + 1) 
- 2 <\nabla_{\bn} U(x) , x>_{\bn} x_{i} \nonumber \\
&& = \frac{\partial U}{\partial x_{i}} (|x|^{2} + 1) - 2 <\nabla_{\bn} U(x) , x>_{\bn} x_{i} \nonumber\\
&& = \frac{x_{i}}{|x|} [1 - |x|^{2}] U^{\prime}(|x|)
\end{eqnarray}
From the estimates on $U$ (established in \cite{MS}) we get $\Phi_{i} \in H^{1}(\bn),\;i = 1,\ldots, N$ and are  linearly independent.\\\\ 
{\bf Step 2.} Let $\bn_{+} = \{ x\in \bn : x_1>0\}$ and $\psi \in H^1_0(\bn_{+})$ satisfies 
\begin{equation}\label{halfball}
-\De_{\bn}\psi - \la \psi = p U^{p-1} \psi \;\mbox{in} \; \bn_{+} 
\end{equation}
then there exists a constant $c\in\mathbb{R}$ such that $\psi = c\Phi_1.$\\
{\it Proof of Step 2.}We know from Lemma \ref{P:3.2} that the embedding $H^{1}_0(\bn_{+}) \hookrightarrow L^{2}(\bn_{+}, U^{p-1} dv_{\bn})$
 is compact.Thus by standard compact operator theory and maximum principle the weighted eigenvalue problem 
$$-\De_{\bn}\psi - \la \psi = \mu U^{p-1} \psi \;\mbox{in} \; \bn_{+},\;\; \psi \in H^1_0(\bn_{+})$$
has a discrete set of eigenvalues $\mu_1<\mu_2<... \rightarrow \infty$, with $\mu_1$ is simple. Moreover any eigenfunction corresponding 
to $\mu_i,\;i>1$ change sign. Since $ \Phi_{1}|_{\bn_{+}}$ is an eigenfunction with $\mu=p$ and it has a constant sign (since $U^{\prime}<0$)
 we conclude that
$\mu_1=p$ and hence Step 2 follows from the simplicity of $\mu_1=p.$\\\\
{\bf Step 3.} Let $\Phi\in H^1(\bn)$ and satisfies $L(\Phi)=0$, then for any unit vector $\nu \in \rn$ there exists a constant 
$c \in \mathbb{R}$ such that
\[
 \Phi (x) - \Phi(R_{\nu}(x)) = c \Phi_{1} (O_{\nu}(x)) \hspace{.5 cm} \forall \hspace{.25 cm} x \in \bn
\]
where $R_{\nu}$ and $O_{\nu}$ denote the reflection with respect to the hyperplane $\{(\xi,\nu) = 0\}$  
and an orthogonal transformation satisfying $O_{\nu} (\nu) = e_{1}$ respectively.\\
{\it Proof of Step 3.} Let us first consider the case when $\nu = e_{1}= (1, \ldots, 0) \in \rn$ then 
$\psi(x) = \Phi(x) - \Phi(R_{e_{1}}x)$ satisfies \eqref{halfball} and hence from Step 2 we get the existence of
 $c\in \mathbb{R}$ such that 
\[
 \psi(x) = c \Phi_{1}(x)
\]
Also note that $\Phi_{1}(x) = \Phi_{1}(O_{e_1}x)$, hence we have 
\[
 \Phi(x)  - \Phi(R_{e_1} x) = c \Phi_{1} (O_{e_1}x)
\]
Similar argument will give for any $\nu$ unit vector $\in \rn$ and $O_{\nu}$ denote the orthogonal transformation such that $O_{\nu}\nu =e_{1}$ 
\[
 \Phi(x) - \Phi(R_{\nu} x) = c \Phi_{1} (O_{\nu} x)
\]
This completes the proof of Step 3.\\
{\bf Step 4.} Let $\Phi$ be in the kernel of $L$, then there exist $c_{1}, \ldots , c_{N}\in\mathbb{R}$ such that the function $\Phi^{r}$
 defined by
\[
 \Phi^{r} := \Phi - \sum_{i=1}^{N} c_{i} \Phi_{i}
\]
is radial.\\
{\it Proof of Step 4}
 We have from Step 3 , given any unit vector $\nu\in \rn$ , there exist $c \in \mathbb{R}$ such that 
\begin{equation}\label{E:3.2}
 \Phi(x) - \Phi(R_{\nu}x) = c \Phi_1(O_{\nu} x)
\end{equation}
We earlier remark that for the orthogonal transformation $O_{e_{1}}$ we have $\Phi_{1}(x) = \Phi_{1}(O_{1}x)$.

Let $R_{i} := R_{e_{i}}$ , $O_{i} = O_{e_{i}}$ then have from \ref{E:3.2}
\[
 \Phi(x) - \frac{1}{2} [\Phi(x) + \Phi(R_{1}x)] = c_{1} \Phi_{1} (O_{1}x) = c_{1} \Phi_{1}(x)
\]
This imply 

\[ 
 \Phi(x) - c_{1}\Phi_{1}(x) = \frac{1}{2} [\Phi(x) + \Phi (R_{1} x)]
\]
Hence $\Phi(x) - c_{1}\Phi_{1}(x)$ is even in $x_{1}$.

Let $\Phi^{1} = \Phi - c_{1}\Phi_{1}$ and apply \ref{E:3.2} we have 
\[
 \Phi^{1}(x) - \frac{1}{2}[\Phi^{1}(x) + \Phi^{1}(R_{2} x)]  = c_{2} \Phi_{1} (O_{2}x)
\]
From \eqref{E:2.7} we get  $\Phi_{1}(O_{2}x)= \Phi_{2} (x)$ :

This imply
\[
 \Phi^{1} - c_{2} \Phi_{2}(x) = \frac{1}{2}[\Phi^{1}(x) + \Phi^{1}(R_{2}x)]
\]
Hence $\Phi - c_{1}\Phi_{1}(x) - c_{2}\Phi_{2}(x)$ is even in $x_{2}$ and of course, in $x_{1}$ as well. 

By iterating, we conclude that, for some $c_{1}, \ldots , c_{N}$ , $\Phi^{r} : = \Phi - \sum_{i=1}^{N} c_{i}\Phi_{i}$ is even all the 
$x_{i}$ and hence $\nabla_{\bn} \Phi^{r}(0) = 0$. 

By \ref{E:3.2}, $ \Phi^{r}(x) - \Phi^{r} (R_{\nu}x) = c\Phi_{1}(O_{\nu}x)$   for some $c$. Since by Hopf lemma 
\[
 \nabla_{\bn}\Phi_{1}(O_{\nu}x)|_{x= 0} \neq 0
\]
then we must have $c = 0$, i.e. $\Phi^{r}(x) = \Phi^{r}(R_{\nu}x)$ for all $x$ and for all $\nu$. Hence $\Phi^{r}$ is radial.\\
{\bf Step 5.} It follows from Theorem \ref{nondegeneracy-r} that $\Phi^{r}=0$ and hence $\Phi = \sum_{i=1}^{N} c_{i} \Phi_{i}.$ This 
completes the proof of Theorem \ref{T:3.1}\\\\
{\bf Proof of Theorem \ref{nondegeneracy-b}.} Thanks to Theorem \ref{nondegeneracy-r-b}, we only have to show that if $\tilde L(\Phi)=0$
then $\Phi$ is radial.\\\\
Let $u(x) =\tilde u(|x|)$ be the unique radial solution of \eqref{eqn-ball}, define $v(x)= \frac{x_{i}}{|x|} [1-|x|^{2}]{\tilde u}^{\prime}(|x|)$, then 
we see that $\tilde L(v) = 0\;\;{\rm in}\; B^+$ where $B^+ = \{ x\in B : x_1>0 \}$. Moreover  $v<0$ in $B^+$ and $v<0$ on 
$\partial B^+ \cap \partial B$, hence the first eigenvalue of $\tilde L$ in $B^+$ is positive (see \cite{DGP}).\\\\  
Let $\Phi \in H^1_0(B)$ be such that $\tilde L(\Phi)=0$, then from standard elliptic theory we know that $\Phi$
is smooth in $\overline B.$ Let $\psi(x) = \Phi(x)-\Phi(\tilde x)$ where $\tilde x =(-x_1,x_2,...,x_n)$ then $\psi$ solves
\begin{equation}
 \tilde L (\psi) =0 \;\;{\rm in}\; B^+,\; \psi \in H^1_0(B^+)
\end{equation}
Since the first eigen value of $\tilde L$ is positive we get $\psi =0$ and hence $\phi$ is symmetric in the $x_1$ direction. 
Since the equation is invariant under rotation we get $\Phi$ is symmetric in all the directions and hence radial. This completes the proof 
of Theorem \ref{nondegeneracy-b}.

\section{Perturbed problems}
In this section we will prove Theorem \ref{morseindex}. Let $U$ be the unique solution of \eqref{E:1.1}, define for $u\in H^1_r(\bn)$
\begin{equation}\label{energy-pert}
I_{\epsilon}(u) = \frac{1}{2} \int_{\bn} \left[|\nabla_{\bn} u|^{2} - (\la-\epsilon U^{p-1}) u^2\right] - 
\frac{1+\epsilon}{p+1} \int_{\bn} |u^+|^{p+1}
\end{equation}
Then we know that $I_{\epsilon}^{\prime}(u) = 0$ iff $u$ solves the PDE
\begin{equation} \label{perturbed}
\left.
\begin{array}{rll}
 -\De_{\bn} u - (\la -\epsilon U^{p-1} )u &= &(1+\epsilon)|u|^{p-1}u\\ 
 u &> &0\\
  u &\in &H^{1}_r(\bn)
\end{array}\right\}
\end{equation}
Observe that for any $\epsilon >0$, $u=U$ itself is a solution of \eqref{perturbed}. Our aim in this section is to prove that $U$ is the only positive radial solution of this equation for small $\epsilon$ and $U$ is a mountain pass critical point.\\
\begin{thm}\label{exis-per} Let $\la$ satisfy \eqref{conditions}, then there exists an $\epsilon_0>0$ such that for all $0<\epsilon <\epsilon_0$, $I_{\epsilon}$ on $H^{1}_r(\bn)$ has a mountain pass critical point.
\end{thm}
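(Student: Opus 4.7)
The plan is to apply the classical Ambrosetti--Rabinowitz mountain pass theorem to the functional $I_\epsilon$ on the radial Sobolev space $H^1_r(\bn)$. Since $U$ is itself a critical point of $I_\epsilon$ for every $\epsilon \geq 0$ (as one checks by testing the Euler--Lagrange equation for $I_\epsilon$ against an arbitrary $v$ and using the equation satisfied by $U$), the natural candidate mountain pass level is
\[
I_\epsilon(U) \;=\; \frac{(1+\epsilon)(p-1)}{2(p+1)}\int_{\bn} U^{p+1}\, dv_{\bn},
\]
reached along the path $t\mapsto tU$.

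First I would verify the mountain pass geometry. Rewriting the quadratic part of $I_\epsilon$ as $\tfrac{1}{2}\int[|\nabla_{\bn} u|^2-\la u^2]+\tfrac{\epsilon}{2}\int U^{p-1}u^2$ and noting $U^{p-1}\geq 0$, the hypothesis $\la<\bigl(\tfrac{N-1}{2}\bigr)^2$ together with the Poincar\'e--Sobolev inequality \eqref{PS2} makes this quadratic form coercive and equivalent to $\|u\|_{H^1(\bn)}^2$. Combined with $\|u\|_{p+1}^{p+1}\leq C\|u\|^{p+1}$ one obtains $I_\epsilon(u)\geq c\rho^2-C(1+\epsilon)\rho^{p+1}>0$ on a small sphere $\|u\|=\rho$. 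On the other hand, since $p+1>2$, $I_\epsilon(tU)\to-\infty$ as $t\to+\infty$, so the straight line $t\mapsto tU$ connects $0$ to a point with negative energy.

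Second, I would establish the Palais--Smale condition in the subcritical regime. Any PS sequence $\{u_n\}\subset H^1_r(\bn)$ is bounded by the standard argument combining the energy bound with $\langle I_\epsilon'(u_n),u_n\rangle=o(\|u_n\|)$, using that $\tfrac{p+1}{2}>1$. Passing to a subsequence, $u_n\rightharpoonup u$ in $H^1_r(\bn)$. When $1<p<\frac{N+2}{N-2}$ (resp.\ $1<p<\infty$ if $N=2$), Lemma \ref{cpt-embedding} yields strong $L^{p+1}$ convergence, while Proposition \ref{P:3.2} applied to $V=U^{p-1}$ (which tends to $0$ at infinity by the estimates of \cite{MS}) gives strong convergence in $L^2(\bn,U^{p-1}dv_{\bn})$. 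Testing $I_\epsilon'(u_n)-I_\epsilon'(u)$ against $u_n-u$ then forces strong convergence in $H^1_r$.

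The main obstacle is the critical case $p=\frac{N+2}{N-2}$, $N\geq 4$, because the radial compact embedding of Lemma \ref{cpt-embedding} fails. Here one has only a local Palais--Smale condition below the Euclidean threshold $\tfrac{1}{N}S^{N/2}$ (with $S$ the Aubin--Talenti constant). To control the mountain pass level $c_\epsilon$, I would use the path $t\mapsto tU$ to obtain $c_\epsilon\leq \max_{t\geq 0}I_\epsilon(tU)=I_\epsilon(U)$, and then invoke the energy estimate of Mancini--Sandeep \cite{MS}, which under \eqref{conditions} produces $\tfrac{p-1}{2(p+1)}\int_{\bn}U^{p+1}\,dv_{\bn}<\tfrac{1}{N}S^{N/2}$; continuity in $\epsilon$ preserves the strict inequality for $\epsilon$ small. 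Together, the mountain pass geometry, the $\mathrm{PS}_{c_\epsilon}$ condition, and this energy estimate yield the claimed mountain pass critical point via the Ambrosetti--Rabinowitz theorem.
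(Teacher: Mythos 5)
Your argument is correct, and it coincides with the paper's proof in the mountain pass geometry (via \eqref{PS2}) and in the subcritical case (Palais--Smale via Lemma \ref{cpt-embedding}); the difference lies entirely in how the critical case $p=\frac{N+2}{N-2}$ is brought below the compactness threshold $\frac{1}{N}S^{N/2}$. The paper constructs a fresh test function: it performs the conformal change $v=\left(\frac{2}{1-|x|^2}\right)^{\frac{N-2}{2}}u$, identifies $I_{\epsilon}$ with a Euclidean Brezis--Nirenberg functional $J_{\epsilon}$ with positive potential $a(x)$, and imports the classical Aubin-type estimate from \cite{BN} to produce a path staying below $\frac{1}{N}S^{N/2}$. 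You instead exploit the fact that $U$ is already a critical point of $I_{\epsilon}$ for every $\epsilon$: along the ray $t\mapsto tU$ the maximum is $I_{\epsilon}(U)=\frac{(1+\epsilon)(p-1)}{2(p+1)}\int_{\bn}U^{p+1}$, which at the critical exponent equals $\frac{1+\epsilon}{N}\int_{\bn}U^{2^{\ast}}$, and the strict inequality $\int_{\bn}U^{2^{\ast}}<S^{\frac{N}{2}}$ holds because $U$ realizes the hyperbolic best constant, which \cite{MS} shows is strictly below the Euclidean one; smallness of $\epsilon$ then keeps the level strictly under the threshold. Your route is shorter and avoids the conformal change, but it leans on the energy identity for the already-known solution $U$ (and on its uniqueness, so that the solution of \cite{MS} really is the minimizer), whereas the paper's construction is self-contained at this point and does not presuppose any quantitative information about $U$ beyond its existence. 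One small point worth making explicit in your write-up: the local Palais--Smale property you invoke is exactly Theorem 3.3 of \cite{PS} (the paper's sentence ``does not satisfy'' there is evidently a typo for ``does satisfy''), and you should verify the arithmetic $\frac{p-1}{2(p+1)}=\frac{1}{N}$ at $p=2^{\ast}-1$, which is what makes the cited energy bound land precisely at the threshold.
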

\begin{proof} The inequality \eqref{PS2} easily establishes that $I_{\epsilon}$ has the mountain pass geometry. Also for 
$1<p<\frac{N+2}{N-2}$  $I_{\epsilon}$ 
satisfies the Palais Smale condition thanks to Lemma \eqref{cpt-embedding}. Thus the theorem follows in the subcritical case. \\
In the critical case, in general $I_{\epsilon}$ does not satisfy the P.S. condition. However it follows from Theorem 3.3 of \cite{PS} that
$I_{\epsilon}$ does not satisfy the Palais Smale condition at level $\beta$ if $0<\beta<\frac{S^{\frac{N}{2}}}{N}$ where $S$ is the best 
constant in the Euclidean Sobolev inequality given by 
$$ S= \inf\left\{\int\limits_{\rn}|\nabla \phi|^2\;:\; \phi \in C_c^\infty(\rn),\int\limits_{\rn}|\phi|^{2^\ast}=1\right\}.$$  
Thus in the critical case it remains to find a function $u_1 \in H^{1}_r(\bn) $ such that $I_{\epsilon}(u_1) \le 0$ and 
$\sup\limits_{0\le t\le 1}I_{\epsilon}(tu_1)<\frac{S^{\frac{N}{2}}}{N}.$ Let us assume that $\epsilon$ is small enough and 
$\la-\epsilon U^{p-1} \ge \tilde \la > \frac{N(N-2)}{4}.$ Then for $u\in C_c^\infty(\bn)$, after a conformal change of metric, i.e., 
putting $v(x) = \left(\frac{2}{1-|x|^2}\right)^{\frac{N-2}{2}}u(x)$ we get
$$I_{\epsilon}(u) = \frac{1}{2} \int_{\bn} \left[|\nabla v|^{2} - a(x) v^2\right]dx- 
\frac{1+\epsilon}{p+1} \int_{\bn} |v^+|^{2^{\ast}}dx :=J_{\epsilon}(v)$$
where $a(x)= \left(\la-\frac{N(N-2)}{4}-\epsilon U^{p-1}\right)\frac{4}{(1-|x|^2)^2}$. Since $ a(x)>0$ from the well studied Brezis Nirenberg problem (\cite{BN}) we know the 
existence of $v_1 \in C_c^\infty(\bn)$ such that $J_{\epsilon}(v_1) \le 0$ and 
$\sup\limits_{0\le t\le 1}J_{\epsilon}(tv_1)<\frac{S^{\frac{N}{2}}}{N}.$ Thus $u_1(x) = \left(\frac{1-|x|^2}{2}\right)^{\frac{N-2}{2}}v_1(x)$
satisfies all the required properties. Thus $I_{\epsilon}$ has a radial mountain pass critical point.\end{proof}
\begin{thm}\label{uni-per} Let $\la$ satisfy \eqref{conditions}, then there exists an $\epsilon_0>0$ such that for all $0<\epsilon <\epsilon_0$, \eqref{perturbed} has a unique positive radial solution.
\end{thm}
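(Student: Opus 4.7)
The plan is to combine a local uniqueness argument via the implicit function theorem with a global compactness argument. Observe first that $U$ itself is a positive radial solution of \eqref{perturbed} for every $\epsilon>0$, since the extra term $-\epsilon U^{p-1}U+\epsilon U^{p}$ vanishes. So the task is to exclude the existence of any other positive radial solution for small $\epsilon$.

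For local uniqueness, I consider the $C^1$ map
\[
F:\mathbb{R}\times H^1_r(\bn)\to \bigl(H^1_r(\bn)\bigr)^{\ast},\qquad F(\epsilon,u)=-\De_{\bn}u-(\la-\epsilon U^{p-1})u-(1+\epsilon)|u|^{p-1}u,
\]
so that $F(\epsilon,U)\equiv 0$. Its partial derivative $D_uF(0,U)=L\big|_{H^1_r(\bn)}$ is Fredholm of index zero: the zero-order term $pU^{p-1}$ is a compact perturbation of $-\De_{\bn}-\la$ in $H^1_r(\bn)$, because $U\to 0$ at the boundary of $\bn$, so Proposition \ref{P:3.2} applies. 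By Theorem \ref{nondegeneracy-r} this operator is injective, hence an isomorphism. The implicit function theorem then yields $\epsilon_1>0$ and a neighborhood $W$ of $U$ in $H^1_r(\bn)$ such that $u=U$ is the unique solution of \eqref{perturbed} inside $W$ for every $\epsilon\in(0,\epsilon_1)$.

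For global uniqueness, I would argue by contradiction. Suppose there exist $\epsilon_n\downarrow 0$ and positive radial solutions $u_n\not\equiv U$ of \eqref{perturbed}, which by the local step must lie outside $W$. Testing \eqref{perturbed} against $u_n$ gives the Nehari identity
\[
\int_{\bn}\!\bigl[|\nabla_{\bn}u_n|^2-(\la-\epsilon_n U^{p-1})u_n^2\bigr]\,dv_{\bn}=(1+\epsilon_n)\int_{\bn}u_n^{p+1}\,dv_{\bn},
\]
which combined with \eqref{PS2} and $\la<\tfrac{(N-1)^2}{4}$ gives a coercive lower estimate $\|u_n\|_{H^1}\ge\delta>0$. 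For the upper bound I would use that any positive critical point $u_n$ satisfies the Nehari relation, so
\[
I_{\epsilon_n}(u_n)=\Bigl(\tfrac12-\tfrac{1}{p+1}\Bigr)(1+\epsilon_n)\int_{\bn}u_n^{p+1}\,dv_{\bn},
\]
and combine this with the fact that positive radial solutions are characterized by a minimum property on the Nehari manifold (following the ground-state analysis in \cite{MS}), yielding $\|u_n\|_{H^1}\le C$ uniformly. Pulling out a weakly convergent subsequence $u_n\rightharpoonup u_\ast$, the compactness of $H^1_r(\bn)\hookrightarrow L^{p+1}(\bn)$ in the subcritical range (Lemma \ref{cpt-embedding}), together with Proposition \ref{P:3.2} applied to the compactly-decaying weight $U^{p-1}$, forces strong convergence $u_n\to u_\ast$. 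The limit is then a positive radial solution of \eqref{E:1.1}, hence $u_\ast=U$ by the uniqueness theorem of \cite{MS}. This places $u_n\in W$ for large $n$, contradicting the local step.

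In the critical case $p=(N+2)/(N-2)$, compactness is the main obstacle: I would refine the above via the Struwe-type profile decomposition of \cite{PS} (Theorem 3.3), showing that any Palais--Smale sequence below the threshold $S^{N/2}/N$ is strongly compact; since the hypothesis $\la>N(N-2)/4$ keeps the mountain-pass level of $I_{\epsilon_n}$ strictly below this threshold (uniformly in small $\epsilon$, by the comparison with the Brezis--Nirenberg functional $J_\epsilon$ performed in the proof of Theorem \ref{exis-per}), the strong convergence $u_n\to U$ persists, and the contradiction with local uniqueness is reached exactly as before.
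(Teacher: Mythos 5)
Your strategy (implicit function theorem for local uniqueness plus a compactness argument to rule out far-away solutions) is entirely different from the paper's, which reduces \eqref{perturbed} to the ODE $u^{\prime\prime}+\frac{N-1}{\tanh t}u^{\prime}+\la_\epsilon u+u^p=0$ in hyperbolic polar coordinates and runs the Kwong--Li / Mancini--Sandeep auxiliary-energy machinery (the function $G_\epsilon$, its monotonicity, and a comparison lemma) with $\la$ replaced by $\la-\epsilon U^{p-1}$. Unfortunately your route has a fatal circularity in the context of this paper: the injectivity of $D_uF(0,U)=L|_{H^1_r(\bn)}$ that you invoke is exactly Theorem \ref{nondegeneracy-r}, and in the paper that theorem is \emph{deduced from} Theorem \ref{morseindex}, whose proof rests on Theorem \ref{uni-per} --- the statement you are trying to prove. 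The entire point of introducing the perturbed functional $I_\epsilon$ is to manufacture a proof of radial nondegeneracy; you cannot assume radial nondegeneracy as an input. Unless you supply an independent proof that $L$ has trivial radial kernel (which is the hard part), the local step does not stand.

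The global step has a second genuine gap. To run the contradiction argument you need a uniform $H^1$ bound on \emph{arbitrary} positive radial solutions $u_n$ of \eqref{perturbed}, and you obtain it by asserting that positive radial solutions ``are characterized by a minimum property on the Nehari manifold.'' That is not known a priori: the theorem to be proved is that \emph{every} positive radial solution equals $U$, so you may not restrict attention to ground states --- a hypothetical second solution could have arbitrarily large energy, and nothing in your argument excludes a sequence with $\|u_n\|_{H^1}\to\infty$. The same issue undercuts the critical case: the profile-decomposition argument only bites if the energy levels of the $u_n$ sit below $S^{N/2}/N$, which again presupposes they are mountain-pass solutions. The paper's ODE approach avoids both problems because the shooting/energy analysis treats all positive solutions of the initial value problem at once, with no a priori bound or nondegeneracy input required.
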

\begin{proof}First note that any positive radial solution of \eqref{perturbed} is radially decreasing. Denote the solution by $u(|x|).$ Normalizing the 
constant on the RHS to one and writing the equation in hyperbolic polar co-ordinates, i.e., $|x|= \tanh \frac t2$
we have to prove the uniqueness of solutions of the ODE 
\begin{equation}\label{E:3.4}
 u^{\prime\prime} + \frac{N-1}{\tanh t} u^{\prime} + \la_{\epsilon} u + u^{p} =0, \hspace{.5cm} u^{\prime}(0) = 0,
 \end{equation}
satisfying 
\begin{equation}\label{finite-energy}
\int\limits_0^\infty[u^2 +|u^{\prime}|^2](\sinh t)^{N-1}\;dt < \infty 
\end{equation}
where $\la_{\epsilon}=\la-\epsilon U^{p-1}.$ The term on the LHS of \eqref{finite-energy} is the  $H^{1}(\bn)$ norm of $u.$ 
As mentioned before, when $\epsilon=0$ uniqueness of solutions of \eqref{E:3.4}-\eqref{finite-energy} has been established in \cite{MS}.
For $\epsilon >0$ also the proof follows in the same lines as in \cite{MS} with some modifications. So we will only outline the proof, 
modifying the details which are likely to differ in the case of $\epsilon >0$,
and refer to \cite{MS} for details.\\\\
The main ingredient in the uniqueness proof is the following auxiliary energy $E_{\hat{u}}$ (see \cite{MS},\cite{UR})defined by :
\[
 E_{\hat{u}}(t) : = \frac{1}{2} (\sinh^{\beta} t)\hat{u}'^{2} + \frac{|\hat{u}|^{p+1}}{p+1} + \frac{1}{2} G_{\epsilon}\hat{u}^{2}
\]
where $\hat{u} = (\sinh^{\alpha} t) u ,\; \alpha = \frac{2(N-1)}{p+3},\; \beta:= \alpha(p -1)$ and 
$ G_{\epsilon}(t) := A_{\epsilon}(t) \sinh^{\beta} t + B \sinh^{\beta -2} t $
\[
 A_{\epsilon} : = A - \epsilon U^{p-1}, \hspace{.5 cm}  A : =\la - \frac{\alpha^{2}(p+1)}{2}, \hspace{.25 cm} 
B := \frac{\alpha}{2}[2- \alpha (p+1)].
\]
Proceeding exactly as in the proof of Lemma 3.4 of \cite{MS} we get:\\\\
{\it Let $N \geq 2$, $p> 1$, $\la < \left(\frac{N-1}{2}\right)^{2}$ and $u > 0$ be a solution of \ref{E:3.4}. Then 
\begin{equation}\label{est}
\left.
\begin{array}{c}
  \lim\limits_{t\rightarrow \infty}\frac{\log u^{2}}{t} = \lim\limits_{t\rightarrow \infty} \frac{\log {u^{\prime}}^{2}}{t} =
\lim\limits_{t\rightarrow \infty} \frac{\log [u^{2} + {u^{\prime}}^{2}]}{t} = -[N-1 + \sqrt{(N-1)^{2} - 4 \la}] \\
\lim\limits_{t\rightarrow \infty}\frac{u^\prime(t)}{u(t)} = -\frac{[N-1 + \sqrt{(N-1)^{2} - 4 \la}]}{2}
\end{array}\right\}
\end{equation}}\\
Using the estimates in \eqref{est} we have the following estimates on the auxiliary energy :\\
\begin{equation}\label{energy0}
E_{\hat{u}}(t) = \left\{
\begin{array}{lll}
  \frac{\alpha^2+B}{2}\sinh ^{\alpha(p+1)-2}t u^2(t) +o(1) &{\rm if} &N=2 \\
 o(1) &{\rm if} & N\ge 3
\end{array}\right.
\end{equation}
as $t\rightarrow 0$, and
\begin{equation}\label{energyinf}
E_{\hat{u}}(t) = o(1) \;{\rm as} \;t\rightarrow \infty \;{\rm if} \; N\ge 2
\end{equation}
Since $\hat{u}$ satisfies 
\[
 (\sinh^{\beta} t) \hat{u}^{\prime\prime} + \frac{1}{2} [\sinh^{\beta} t]^{\prime} \hat{u}^{\prime} + G_{\epsilon}(t) \hat{u}^{p} =0
\]
we get $$\frac{d}{dt} E_{\hat{u}} (t) = \frac{1}{2} G_{\epsilon}^{\prime} \hat{u}^{2}$$
Next we claim:\\
{\it Claim.} There exists $\epsilon_0>0$ and $t_{1}>0$ such that for all $0\le\epsilon < \epsilon_0$
\begin{equation}\label{mono}
\left.
\begin{array}{lll}
 G^{\prime}_{\epsilon}(t) < 0  &\forall t > 0 &{\rm if}   N=2 \;{\rm and}\;\la < \frac{2(p+1)}{(p+3)^{2}}\\
 G^{'}_{\epsilon}(t) (t_{1} - t) > 0 &\forall  t \neq t_{1} &{\rm if}\; N \geq 3,p < 2^* -1, \la < \frac{2(N-1)^{2}(p+1)}{(p+3)^{2}}\\
 G^{'}_{\epsilon}(t) > 0  &\forall t > 0 &{\rm if} N \geq 3,p < 2^* -1,\la \geq \frac{2(N-1)^{2}(p+1)}{(p+3)^{2}} \\
 G^{'}_{\epsilon}(t) > 0 &\forall t > 0 &{\rm if} N \geq 3,p = 2^* -1,\la > \frac{N(N-2)}{4}

\end{array}\right\}
\end{equation}
{\it Proof of Claim :} By direct calculation we have
\begin{eqnarray*}\label{E:3.9}
 G_{\epsilon}^{\prime}(t) &&= A \beta (\sinh t)^{\beta - 1} \cosh t +  B (\beta -2) (\sinh t)^{\beta -3}
  \cosh t 
- \epsilon (p-1) U^{p-2} U^{\prime}(t) (\sinh t)^{\beta} \\ 
   &&- \epsilon \beta  U^{p-1} (\sinh t)^{\beta -1} \cosh t \\ 
&&= \beta (\sinh t)^{\beta -3} \cosh t \left\{ \left(A - \epsilon U^{p-1} - \frac{\epsilon(p-1)}{\beta} U^{p-2}U^{\prime}(t)\tanh t\right)
 (\sinh t)^{2}
 + \frac{B(\beta -2)}{\beta} \right\}
\end{eqnarray*}
To analyze the sign of $G_{\epsilon}^{\prime}$, first recall that we have estimates on $U$ given in \ref{est}.
In the first case in \eqref{mono} we have both $A<0$ and $\frac{B(\beta -2)}{\beta}<0$. Since $U$ and $U^{\prime}$ are bounded we get
for $\epsilon$ small enough $G_{\epsilon}^{\prime}(t)<0$ for all $t$. In the fourth case, $\beta=2$ and $A=\la - \frac{N(N-2)}{4} >0$
so for $\epsilon$ small enough $G_{\epsilon}^{\prime}(t)>0$ for all $t$. In the third case $\beta<2, B<2$ and $A\ge 0$, again the 
conclusion follows. It remains to establish the claim in the second case in \eqref{mono}. In this case we have $A<0$ and $\beta<2, B<2$.
Hence $G_{\epsilon}^{\prime}(t 0$ for large enough $t$ and $G_{\epsilon}^{\prime}(t)>0$ for $t$ close to $0$. Thus it remains to show that 
$\left[ \frac{G_{\epsilon}^{\prime}(t)}{\beta (\sinh t)^{\beta -3} \cosh t} \right]^{\prime} <0$ for all $t$ if $\epsilon$ is small enough.
\begin{eqnarray*}
 \left[ \frac{G_{\epsilon}^{\prime}(t)}{\beta (\sinh t)^{\beta -3} \cosh t}    \right]^{\prime} && = 
 \left[A - {\epsilon} U^{p-1}\right]
2 \sinh t \cosh t - \frac{\epsilon}{\beta}(p-1)U^{p-2}U^{\prime}(t) (\sinh t)^{2} \\
&& - \frac{\epsilon(p-1)}{\beta}\left[ (p-2)U^{p-3}{U^{\prime}}^2 (\sinh t)^{2} \tanh t + U^{p-2}U^{\prime\prime}(t)(\sinh t)^{2} \tanh t  \right.\\
&& +  2 \left. U^{p-2} U^{\prime}(t)  (\sinh t \cosh t) \tanh t + U^{p-2} U^{\prime} (\sinh t)^{2} (\sech t)^{2} \right]
\end{eqnarray*}
From \eqref{est} and the equation $U$ satisfies, we get
$$\left[ \frac{G_{\epsilon}^{\prime}(t)}{\beta (\sinh t)^{\beta -3}\cosh t}  \right]^{\prime}
=\left[A + O(\epsilon)\right]2 \sinh t \cosh t < 0 \;{\rm for}\; \epsilon \;{\rm small \; enough}.$$ 
This completes the proof of claim.\\\\
Next we have the following comparison lemma for solutions of \eqref{E:3.4}, we again refer to \cite{MS}, Lemma 4.1 for its proof.\\
{\it Comparison Lemma.} Let $u,v$ be distinct positive solutions of \eqref{E:3.4} then\\
(i) given $R,M >0$ there exists $\delta= \delta(u,R)$ such that if $u(0),v(0) \le M$ then
$$ u(t_i) =v(t_i),\;0<t_1<t_2\le R, \Longrightarrow t_2-t_1 \ge \delta.$$
In addition if $u$ satisfies the finite energy condition and $v(0)<u(0)$ then there is $t_v$ such that 
$ u(t_v) =v(t_v)$ and there is $\tilde t  =\tilde{t}(u)$ such that
$$u(t_1) =v(t_1),\;0<\tilde{t}<t_1 ,\Longrightarrow v(t)>u(t) \;\forall t >\tilde{t} $$
\\
Having established, the estimates on the energy \eqref{energy0}-\eqref{energyinf}, the monotonicity properties of the energy \eqref{mono}
and the above comparison lemma, we get the following uniqueness result in geodesic balls:\\\\
Let $\la$ satisfy \eqref{conditions} then for every $T>0$ the Dirichlet problem
\begin{eqnarray*}
 &&u^{\prime\prime} + \frac{N-1}{\tanh t} u^{\prime} + \la_{\epsilon} u + u^{p} =0 \\
  &&u^{\prime}(0) = 0,\; u(T)=0,\;u(t)>0 \;\forall t\in [0,T).
 \end{eqnarray*}
has at most one solution.\\\\
The proof follows exactly like the case of $\epsilon =0$ and so we refer to \cite{MS}.
Using this uniqueness result of geodesic balls, the rest of the uniqueness proof follows exactly as in \cite{MS}.
\end{proof}
{\bf Proof of Theorem \ref{morseindex}}. From Theorem \ref{exis-per} and Theorem \ref{uni-per} we know that $I_{\epsilon}$ has a unique 
critical point and it is a 
mountain pass critical point. On the other hand $U$ is a critical point of $I_{\epsilon}$, and hence it is the unique critical point and of 
mountain pass type and hence the radial Morse index is one (See \cite{MP}).


\end{document}